\date{}
\date{}
\newtheorem{theo}{Theorem}
\newtheorem{prop}[theo]{Proposition}
\newcommand{\beq}{\begin{equation}}
\newcommand{\eeq}{\end{equation}}
\begin{document}

\title{An elementary  proof of the halting property for chakravala algorithm}

\author{A. Bauval} 


\begin{abstract} In 1930, A. A. K. Ayyangar allegedly produced the missing proof that the ancient Indian chakravala algorithm -- designed to solve Pell's equation -- always halts. Refining his own elementary arguments, we give a correct and shorter proof.
\end{abstract}

\maketitle

\renewcommand{\thefootnote}{}
\footnote{2010 \emph{Mathematics Subject Classification}:11A: Elementary number theory.}
\footnote{\emph{Key words and phrases}: Chakravala, Pell's equation, Algorithm, Halting problem.}
\renewcommand{\thefootnote}{\arabic{footnote}}
\setcounter{footnote}{0}

\section{Introduction}

The ancient empirical Indian `` cyclic algorithm'', to find a nontrivial solution of Pell's equation $x^2-ny^2=\pm1$ (where $n$ is some nonsquare positive integer) has long been considered as a small variant of the method later independently discovered by Europeans. As such, even renowned mathematicians credited Lagrange for the proof of its validity.\footnote{Ayyangar \cite{Ayyangar} and Selenius \cite{Selenius} give detailed historical analyses of many misconceptions about chakravala. See also \cite{sohum}.} In 1930, A. A. Krishnaswami Ayyangar \cite{Ayyangar} was the first to stress the originality of chakravala and (nearly) give the necessary proof that this more efficient algorithm also reaches the goal. His paper, though sometimes mentionned, does not seem to have been studied with much care,\footnote{Selenius \cite{Selenius} mentions six of Ayyangar's papers (perhaps including \cite{Ayyangar}, but with incomplete reference) but only as ``an attempt to "imitate" the {\sl chakrav\~ala\/} in the form of a continued fraction process'' and also writes: ``Though fairly reviewed, Ayyangar's work attracted very little attention, even in India.''}${}^,$\footnote{Edwards \cite{Edwards}, who does not mention Ayyangar, devotes more than one page (p. 35) of partial indications, through several exercises, to deduce the main properties of the Indian algorithm -- including what we call its halting property -- from those of the ``English'' one.} possibly due to the fact that his proof is rather lengthy. We give a correct and stronger version of his main theorem and use his own arguments to produce a much shorter proof of it.

It is a matter of taste to rephrase the study of both the Indian and European methods in terms of quadratic numbers, or of continued fractions and binary quadratic forms as Ayyangar did. We prefer to stick on using only elementary arithmetic on integers, thereby compromising the belief that the proof for chakravala is at least as hard as for the European algorithm, and was outside Bhaskara's reach (rather than just outside his experimental habits).

The paper is organized as follows: section \ref{best} introduces the two notions, ubiquitous in our paper, of ``best mod $k$ numbers'' -- integers whithin a congruence class which are best approximations of $\sqrt n$ in a certain sense -- and ``steps'', section \ref{description} presents chakravala algorithm --  roughly: a succession of steps -- section \ref{main} contains the main theorem -- according to which the algorithm is somehow reversible -- and sections \ref{twin} and \ref{halting without twins} use it repeatedly to explain why and how the process always halts.

\section{Best mod $k$ numbers, steps, reduced steps}\label{best}
We shall say that a positive integer $m$, chosen within a given congruence class mod $k$, is {\bf best mod $k$}, if $m^2$ is as near of $n$ as possible, i.e. for any positive $m'$ congruent to $m$ mod $k$, $|m^2-n|\le |m'^2-n|$. When  $k<\sqrt n$, such an $m$ must be one of the two elements $m_1,m_2$ of the class which are nearest to $\sqrt n$:
$$0<m_1<\sqrt n<m_2=m_1+k.$$
When only one of them is best, we shall call it {\bf strictly best}. If $k$ is even, it may happen that both are best, i.e. $n-m_1^2=m_2^2-n$. The following is a modified version of a property ``proved'' by Ayyangar\footnote{\cite{Ayyangar}, p. 237--238.}, in order to take this possibility into account.

\begin{prop}\label{propbest}
If some positive integers $k,k',m$ are such that
$$k<\sqrt n\quad{\rm and}\quad m^2-n=\varepsilon kk,\quad{\rm where}\quad\varepsilon=\pm1$$
then, the following are equivalent:
\begin{enumerate}[\upshape $(1)$]
\item $m$ is best mod $k$
\item $k'^2+k^2/4\le n$
\item $m\ge k'+\varepsilon k/2$
\end{enumerate}
and the inequalities in $(2)$ and $(3)$ are strict if and only if $m$ is strictly best.
\end{prop}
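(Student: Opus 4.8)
The plan is to route all three conditions through a single inequality. Put $p:=m-\varepsilon k$. Since $k<\sqrt n$ and $m^2=n+\varepsilon kk'$, the integer $p$ is positive (if $\varepsilon=1$ then $m>\sqrt n>k$, and if $\varepsilon=-1$ then $p=m+k>0$) and lies in the class of $m$ modulo $k$; expanding the square gives
$$p^2-n=-\varepsilon kk'',\qquad\text{where}\qquad k'':=2m-\varepsilon k-k'.$$
Hence $k'+k''=2m-\varepsilon k$, and multiplying $(3)$ by $2$ shows that $(3)$ is exactly the inequality $k''\ge k'$, strict precisely when $(3)$ is strict. So it is enough to prove that $(1)$ is equivalent to $k''\ge k'$ and that $(2)$ is equivalent to $k''\ge k'$, with strictness corresponding in each.

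For $(1)\Leftrightarrow(3)$, I compare $m$ with its neighbour $p$. If $k''\ge k'\ (>0)$, then $p^2-n=-\varepsilon kk''$ has sign opposite to that of $m^2-n$, so $p$ and $m$ are the two elements of their class nearest to $\sqrt n$, one on each side. Every positive $m'$ in the class then satisfies $|m'^2-n|\ge\min(kk',kk'')=kk'=|m^2-n|$ (those on $m$'s side are no nearer to $\sqrt n$ than $m$, those on $p$'s side no nearer than $p$, and $|{\cdot}^2-n|$ is monotone in the distance to $\sqrt n$ on each side), so $m$ is best mod $k$; it is strictly best exactly when $k''>k'$, and when $k''=k'$ the element $p$ realizes the tie. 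Conversely, suppose $k''<k'$. If $k''>0$, then $p$ still lies on the side of $\sqrt n$ opposite to $m$ and $|p^2-n|=kk''<kk'=|m^2-n|$, so $m$ is not best. If $k''=0$, then $p^2=n$, impossible as $n$ is nonsquare. If $k''<0$, then $p^2-n$ has the same sign as $m^2-n$, so $p$ lies on the same side as $m$, and $|p^2-n|<|m^2-n|$ because $p<m$ when $\varepsilon=1$ and $p>m$ when $\varepsilon=-1$; again $m$ is not best.

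For $(2)\Leftrightarrow(3)$, substitute $n=m^2-\varepsilon kk'$ into $(2)$: it becomes $m^2\ge(k'+\varepsilon k/2)^2$, i.e.
$$\bigl(m-k'-\tfrac{\varepsilon k}{2}\bigr)\bigl(m+k'+\tfrac{\varepsilon k}{2}\bigr)\ \ge\ 0.$$
The first factor equals $(k''-k')/2$, so once the second factor is known to be positive this inequality is exactly $k''\ge k'$, with strictness matching. The only genuine computation — and the one place where the hypothesis $k<\sqrt n$ really does work, since for $(1)\Leftrightarrow(3)$ it served merely to keep $p$ positive — is the sign claim $2m+2k'+\varepsilon k>0$: it is obvious when $\varepsilon=1$, and when $\varepsilon=-1$ it is obvious if $k'\ge k$; in the remaining case $\varepsilon=-1$, $k'<k$, one uses $k^2<n=m^2+kk'$, hence $m^2>k(k-k')$, together with the elementary inequality $3k^2>4k'^2$ (valid when $k>2k'$, the only subcase where the claim is not immediate), to get $(2m)^2>(k-2k')^2$ and therefore $2m>k-2k'$.

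I expect the main obstacle to be precisely this sign-and-degeneracy bookkeeping: the neighbour $p$ need not lie on the far side of $\sqrt n$ unless $(3)$ holds, which forces the separate treatment of the subcases $k''\le 0$; and the factor $m+k'+\varepsilon k/2$ carries no evident sign, so one must extract its positivity from $k<\sqrt n$. Handling the equality case $k''=k'$ correctly — which for $k$ even is exactly the situation where both $m_1$ and $m_2$ are best — is the point Ayyangar's original statement did not accommodate, and is why the strengthened ``if and only if $m$ is strictly best'' clause is needed.
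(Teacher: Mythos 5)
Your proof is correct. It rests on the same elementary comparison as the paper's proof -- measuring $m$ against its neighbour $p=m-\varepsilon k$ in the congruence class -- but it is organized differently. The paper splits at the outset according to the sign of $\varepsilon$ and in each case chains $(1)\Leftrightarrow(3)\Leftrightarrow(2)$ by direct manipulation of the inequality $|m^2-n|\le|p^2-n|$; for $\varepsilon=-1$ the passage $(3)\Leftrightarrow(2)$ is handled by solving the quadratic $m^2+km-(n-k^2/2)\ge0$ for $m$, precisely to avoid squaring the possibly negative quantity $k'-k/2$. You instead introduce the integer $k''$ defined by $p^2-n=-\varepsilon kk''$ and funnel all three conditions into the single inequality $k''\ge k'$, which treats both signs of $\varepsilon$ uniformly for $(1)$ and $(3)$ (your case analysis on the sign of $k''$ in the converse direction, including the exclusion of $k''=0$ because $n$ is nonsquare, is careful and correct); the price is paid in $(2)\Leftrightarrow(3)$, where the difference-of-squares factorization obliges you to prove $2m+2k'+\varepsilon k>0$, and your computation ($4m^2>4k(k-k')$ from $k<\sqrt n$, combined with $3k^2>4k'^2$ when $k>2k'$) does this correctly -- it is the exact counterpart of the paper's quadratic-formula manoeuvre. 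A bonus of your formulation: $(k,p,k'')$ is precisely the competing candidate step, $k''\ge k'$ says it is no better than $(k,m,k')$, and the equality case $k''=k'$ is exactly the twin situation of section \ref{twin}, so the strictness clause falls out transparently. Both arguments are complete and of comparable length.
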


\begin{proof}If $m>\sqrt n$ then $\varepsilon=1$ and
$$(1)\Leftrightarrow m^2-n\le n-(m- k)^2\Leftrightarrow m^2-n\le km-k^2/2$$$$\Leftrightarrow kk'\le k(m-k/2)\Leftrightarrow(3)\Leftrightarrow m^2\ge k'^2+kk'+k^2/4\Leftrightarrow(2).$$
If $m<\sqrt n$ then  $\varepsilon=-1$ and
$$(1)\Leftrightarrow n-m^2\le(m+k)^2-n\Leftrightarrow n-m^2\le km+k^2/2,$$which is both equivalent to $kk'\le k(m+k/2)\Leftrightarrow(3)$ and to
$$ m^2+km-(n-k^2/2)\ge0\Leftrightarrow m\ge-k/2+\sqrt{n-k^2/4}$$$$\Leftrightarrow m^2\ge n-k\sqrt{n-k^2/4}\Leftrightarrow k\sqrt{n-k^2/4}\ge kk'\Leftrightarrow(2).$$
(This replaces Ayyangar's squaring argument\footnote{\cite{Ayyangar}, p. 237.}, which was not valid to prove $(3)\Rightarrow(2)$ in this case because $k'-k/2$ may be negative.)
\end{proof}
 
When these conditions are fulfilled (i.e. $k<\sqrt n$, $|m^2-n|=kk'$ and $m$ best mod $k$), we shall say that the triple $(k,m,k')$ is a {\bf step}. If $m$ is also best mod $k'$, we shall call the triple a {\bf reduced step} (this amounts to say that the reverse triple $(k',m,k)$ is also a step). By characterization $(2)$ of the proposition, any step $(k,m,k')$ satisfies $k'<\sqrt n$, and if $k'\ge k$, this step is reduced. 

\section{Chakravala algorithm}\label{description}
Given a nonsquare positive integer $n$, this algorithm produces four sequences of numbers $a_i,b_i,k_i,m_i$, by the following recipe:
\begin{itemize}
\item start the $0$-th stage with $m_{-1}=0,a_0=1,b_0=0,k_0=1$
\item for the $i$-th stage, select $m_i$ congruent to $-m_{i-1}$ mod $k_i$ and best\footnote{If there are two such $m_i$'s, no matter which one is chosen, the sequence of $k_i$'s and the solution eventually produced will be the same. This will be made clearer in section \ref{twin}.} mod $k_i$
\item set $a_{i+1}=(a_im_i+nb_i)/k_i$ and $b_{i+1}=(a_i+m_ib_i)/k_i$
\item set $k_{i+1}=|a_{i+1}^2-nb_{i+1}^2|$ (which is equal to $|m_i^2-n|/k_i$)
\item if $k_{i+1}=1$ then stop, else do the $i+1$-th stage.
\end{itemize}

An easy induction shows that
\begin{itemize}
\item $a_{i+1}$ and $b_{i+1}$ are integers, because for $i>0$, $|a_i(-m_{i-1})+nb_i|=k_ia_{i-1}$ and $|a_i+(-m_{i-1})b_i|=k_ib_{i-1}$,
\item they are coprime, because $|a_ib_{i+1}-b_ia_{i+1}|=1$,
\item $k_{i+1}<\sqrt n$, because $(k_i,m_i,k_{i+1})$ is a step.
\end{itemize}

\section{Main theorem}\label{main}
Whenever the algorithm halts, it produces a nontrivial solution of Pell's equation ($|a_{i+1}^2-nb_{i+1}^2|=k_{i+1}=1$ and $b_{i+1}>0$). Ayyangar noticed that the $a_i,b_i$'s may be forgotten in this halting problem, and claimed to prove the equivalent halting property for the algorithm below. In its formulation, we shall call {\bf successor} of a step $(k,m,k')$ the step (or one of the two steps) $(k',m',k'')$ such that $m'$ is congruent to $-m$ mod $k'$ and best mod $k'$, and $k''=|m'^2-n|/k'$:

\begin{itemize}
\item start the $0$-th stage with $m_{-1}=0,k_0=1$
\item at the $i$-th stage, take for $(k_i,m_i,k_{i+1})$ a successor of $(k_{i-1},m_{i-1},k_i)$ (only $m_{i-1}$ and $k_i$ are used for this)
\item if $k_{i+1}=1$ then stop, else do the $i+1$-th stage.
\end{itemize}

For instance if $n=m^2\pm1$, the sequence is reduced to a single step $(1,m,1)$ and produces the solution $m^2-n.1^2=\mp1$.

The heart of  Ayyangar's paper consists in ``proving'' that ``the'' successor of any {\sl reduced} step (produced or not by the algorithm) is also reduced.\footnote{This theorem is false with his definition of ``reduced'' -- corresponding to what we would call ``strictly reduced'' (meaning that $m$ is {\sl strictly} best mod $k$ and $k'$): we shall see in section \ref{twin} that a strictly reduced step may have two ``twin successors'', which are reduced, but of course not strictly.} A corollary is that every step of the sequence produced by the algorithm is reduced (since the $0$-th step $(1,m_0,k_1)$ is).  The same conclusion follows directly (without induction) from the following strengthening of his theorem:

\begin{theo}A successor of {\bf any} step is reduced, i.e. for any positive integers $k,k',k'',m,m'$ such that $k<\sqrt n$, $kk'=|m^2-n|$, $k'k''=|m'^2-n|$ and $k'\mid m+m'$, if
$$(1)\quad k'^2+\frac{k^2}4\le n\quad{\rm and}\quad(2)\quad k''^2+\frac{k'^2}4\le n,$$
then
$$(3)\quad k'^2+\frac{k''^2}4\le n.$$
\end{theo}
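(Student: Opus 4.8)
\emph{Approach.} The plan is to translate the three inequalities through Proposition~\ref{propbest}, introduce two auxiliary integers that linearise the problem, and then run a short case analysis on one of them.

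First I would fix signs $\varepsilon,\varepsilon'\in\{\pm1\}$ by $m^2-n=\varepsilon kk'$ and $m'^2-n=\varepsilon'k'k''$. Since hypothesis $(1)$ says exactly that $(k,m,k')$ is a step, the closing remark of Section~\ref{best} gives $k'<\sqrt n$; likewise $(2)$ gives $k''<\sqrt n$. So Proposition~\ref{propbest} applies to the moduli $k,k',k''$ and its equivalence $(2)\Leftrightarrow(3)$ turns $(1),(2),(3)$ into
\[
m\ge k'+\tfrac{\varepsilon k}{2},\qquad m'\ge k''+\tfrac{\varepsilon'k'}{2},\qquad m'\ge k'+\tfrac{\varepsilon'k''}{2},
\]
the last being the goal.

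Next I would set $p:=(m+m')/k'$ and $q:=(mm'+n)/k'$, both positive integers: the first divisibility is a hypothesis, and $k'\mid mm'+n$ since $m^2\equiv n$ and $mm'\equiv-m'^2\equiv-n\pmod{k'}$. Substituting $n=m^2-\varepsilon kk'$ into $k'q=mm'+n=m(m+m')-\varepsilon kk'$ yields $\varepsilon k=pm-q$, and symmetrically $\varepsilon'k''=pm'-q$. Subtracting the two slacks then gives the master identity
\[
\bigl(2m'-2k'-\varepsilon'k''\bigr)=\bigl(2m-2k'-\varepsilon k\bigr)+(p-2)(m-m').
\]
Because $(1)$ asserts the bracket on the right is $\ge0$, the goal $(3)$ follows whenever $(p-2)(m-m')\ge0$ — in particular for $p=2$, and for $p\ge3$ as soon as $m\ge m'$.

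For $p=1$ one has $m+m'=k'<\sqrt n$, hence $m,m'<\sqrt n$ and $\varepsilon=\varepsilon'=-1$; then $q=m+k$ and $k''=q-m'=k'+k-2m'$, so $k''-k'=k-2m'\ge0$ because $(1)$ now reads $k\ge2(k'-m)=2m'$. Thus $k''\ge k'$ and $(k',m',k'')$ is reduced by the remark at the end of Section~\ref{best}. The only remaining case — and the one I expect to carry the real content — is $p\ge3$ with $m<m'$, where $(1)$ alone is not enough and $(2)$ must enter. Here $m<m'$ and $m+m'=k'p\ge3k'$ force $m'>\tfrac12 k'p\ge\tfrac32 k'>k'$. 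If $\varepsilon'=-1$, the goal $m'\ge k'-\tfrac{k''}{2}$ is then immediate; if $\varepsilon'=1$, hypothesis $(2)$ gives $k''\le m'-\tfrac{k'}{2}$, and since $m'\ge\tfrac32 k'$ this forces $k''\le2(m'-k')$, i.e.\ $m'\ge k'+\tfrac{k''}{2}$, which is $(3)$. The one genuinely non-mechanical step is extracting $m'>k'$ from $m<m'$ in this last case; everything else is bookkeeping with Proposition~\ref{propbest} and the master identity.
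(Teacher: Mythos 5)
Your proof is correct and, at bottom, the same as the paper's: both pivot on the integer $l=p=(m+m')/k'$, the relation $p(m'-m)=\varepsilon'k''-\varepsilon k$ (your ``master identity'' is exactly the difference of the paper's two displayed formulas for $m'$ and $m$), the translation of $(1)$, $(2)$, $(3)$ into the linear inequalities of Proposition~\ref{propbest}, and the decisive bound $m'>\tfrac32 k'$ in the hard case. The only organizational difference is that the paper first disposes of $k'\le k''$ and $k''\le k$, so that $l=1$ (and $l=2$ when $\varepsilon'=1$) becomes impossible and hypothesis $(2)$ is never needed again, whereas you keep all cases alive, settle $p=1$ by showing $k''\ge k'$ outright, and spend $(2)$ in the $\varepsilon'=1$ endgame --- a harmless trade.
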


\begin{proof} Since $(3)$ follows from $(2)$ if $k'\le k''$ and from $(1)$ if $k''\le k$, assume from now on that $k<k''<k'$. Let$$\varepsilon=\frac{m^2-n}{kk'},\quad\varepsilon'=\frac{m'^2-n}{k'k''},\quad{\rm and}\quad l=\frac{m+m'}{k'},$$
then
$$k'l(m'-m)=m'^2-m^2=\varepsilon'k'k''-\varepsilon kk'.$$
Simplifying by $k'$ and combining with $m'+m=k'l$ leads to$$(4)\quad m'=\frac12\left(k'l+\frac{\varepsilon'k''-\varepsilon k}l\right)>\frac12\left(k'l+\frac{(\varepsilon'-1)k''}l\right)$$(because $k<k''$) and
$$m=\frac12\left(k'l-\frac{\varepsilon'k''-\varepsilon k}l\right).$$
From this expression of $m$ and hypothesis $(1)$, we deduce
$$\frac12\left(k'l-\frac{\varepsilon'k''-\varepsilon k}l\right)\ge k'+\varepsilon\frac k2$$
hence $l$ cannot be equal to $1$ because $k''<k'$, and when $\varepsilon'=1$, it cannot either be equal to $2$ because $k<k''$. This allows to eliminate $l$ from the lower bound $(4)$:
\begin{itemize}
\item if $\varepsilon'=-1$ then $l\ge 2$ and
$$m'>\frac12\left(k'l-\frac{2k''}l\right)\ge\frac12\left(2k'-\frac{2k''}2\right)=k'-\frac{k''}2$$
\item if $\varepsilon'=1$ then $l\ge 3$ and
$$m'>\frac{k'l}2\ge\frac{3k'}2>k'+\frac{k''}2.$$
\end{itemize}
Since $m'\ge k'+\varepsilon'\frac{k''}2$ is equivalent to $(3)$, this ends the proof.

\end{proof}

\section{Halting with twins}\label{twin}
Recall from section \ref{best} that for any step, there is either a unique `` strict'' successor, or a pair of what we shall call {\bf twin successors} $(k,m_\pm,k')$ with $k$ even, $m_\pm=k'\pm k/2$ and $n=k'^2+k^2/4$. This possibility was missed by Ayyangar, but we shall see that such a ``forking'' in the algorithm is local -- i.e. after the next step, the two variants of the sequence merge back to a single one -- and may occur only once. Moreover, such an ``accident'' will turn out to be more happy than troublesome.

\medskip\noindent{\bf Example.}~
For $n=29$, $5<\sqrt n<6$ and $n-5^2=4<7=6^2-n$ hence the first step $(1,m_0,k_1)$ is given by $m_0=5$ and $k_1=4/k_0=4$. Then, $m_1$ must be congruent to $-5$ mod $4$. Since $3<\sqrt n<7$ and $n-3^2=20=7^2-n$, the second step is a ``twin successor'': either $(4,3,5)$ or $(4,7,5)$. If we choose $(k_1,m_1,k_2)=(4,3,5)$ and compute the following steps, the whole sequence will be $(1,5,4),(4,3,5),(5,7,4),(4,5,1)$, whereas if we choose $(4,7,5)$, we obtain $(1,5,4),(4,7,5),(5,3,4),(4,5,1)$. Computing the solution associated to these two sequences gives the same result: $70^2-13^2.29=-1$.

\medskip\noindent{\bf General computations.}~
When
$$(k_i,m_i,k_{i+1}=k)\quad{\rm and}\quad(k_{i+1},m_{i+1},k_{i+2})=(k,m_\pm,k')$$
are two consecutive steps produced by the algorithm, let us find the two next steps.

By the main theorem, a successor of $(k,m_+,k')$ (resp. $(k,m_-,k')$)  is $(k',m_-,k)$ (resp. $(k',m_+,k)$) and it is the only one, otherwise $k'$ would be equal to $k$ and all the previous (and following) steps would be of the form $(k,m_\pm,k)$, which is impossible since $k$ is even, whereas $k_0=1$.

Similarly, a successor of $(k',m_-,k)$ (resp. $(k',m_+,k)$) is $(k,m_i,k_i)$ and it is the only one, otherwise $k_i$ would be equal to $k'$ and $m_i$ to $m_+$ or $m_-$, which is impossible. Indeed, all the previous (and following) $k_j$'s would then be, alternatively, equal to $k$ or $k'$, hence $k'$ would be equal to $1$ and $k$ to $2$ (since it divides $m_i+m_\pm$) but for $n=1^2+2^2/4=2$, the sequence is merely $(1,1,1)$ and has no twin.

Let us summarize these computations and draw a consequence:
\begin{prop}\label{proptwins}The algorithm produces only one or two sequences. In the latter case, the two sequences are finite, of the form $$(1,m_0,k_1),(k_1,m_1,k_2)\ldots,(k_{i-1},m_{i-1},k_i)(k_i,m_i,k),(k,k'+k/2,k'),$$
$$(k',k'-k/2,k),(k,m_i,k_i),(k_i,m_{i-1},k_{i-1})\ldots,(k_2,m_1,k_1),(k_1,m_0,1),$$
and its reverse,
$$(1,m_0,k_1),(k_1,m_1,k_2)\ldots,(k_{i-1},m_{i-1},k_i)(k_i,m_i,k),(k,k'-k/2,k'),$$
$$(k',k'+k/2,k),(k,m_i,k_i),(k_i,m_{i-1},k_{i-1})\ldots,(k_2,m_1,k_1),(k_1,m_0,1),$$
\end{prop}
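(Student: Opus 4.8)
The plan is to repackage the ``general computations'' above into one downward induction. I would show that the algorithm runs deterministically except at (at most) one ``twin'' stage, and that once a fork does occur each of the two branches is forced for two more steps and then retraces, \emph{in reverse}, the part of the sequence that preceded the fork, halting exactly when the reversed $0$-th step is reached; reading the branches off then yields the two displayed sequences, which are visibly reverses of each other.

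First I would fix the deterministic backbone. The $0$-th step $(1,m_0,k_1)$ is unique because $k_0=1$ is odd while a fork requires an even modulus; and at any later stage the step $(k_i,m_i,k_{i+1})$, being a successor of $(k_{i-1},m_{i-1},k_i)$, is uniquely determined unless $k_i$ is even and $n=k_{i+1}^2+k_i^2/4$, in which case its two possibilities are the twins $(k_i,k_{i+1}\pm k_i/2,k_{i+1})$. So if no fork ever occurs the algorithm produces a single sequence, the first alternative of the proposition. I would also record the one consequence of the main theorem used below: along any sequence every step is reduced, hence the reverse $(c,b,a)$ of a step $(a,b,c)$ is again a step, and transitions reverse to transitions --- if $(a,b,c)$ has successor $(c,d,e)$ then $(e,d,c)$ has successor $(c,b,a)$ --- which one reads off from $d\equiv-b\pmod{c}$, the fact that $b$ is best mod $c$, and $|b^2-n|=ac$.

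Now suppose a fork occurs, and take the first one; write the two consecutive steps around it as $(k_i,m_i,k)$ and $(k,m_+,k')$, the $m_-$ branch being symmetric. By the ``general computations'', $(k,m_+,k')$ has the unique successor $(k',m_-,k)$, which has the unique successor $(k,m_i,k_i)$. Starting from this triple I would run a downward induction, showing that for $t=i,i-1,\dots,1$ the successor of $(k_{t+1},m_t,k_t)$ is $(k_t,m_{t-1},k_{t-1})$ and is unique: existence of this transition is transition-reversal applied to the pre-fork transition $(k_{t-1},m_{t-1},k_t)\to(k_t,m_t,k_{t+1})$, and uniqueness repeats the uniqueness argument of the ``general computations'' --- a second successor at $(k_{t+1},m_t,k_t)$ forces $k_t$ even, $n=k_{t-1}^2+k_t^2/4$ and $m_{t-1}=k_{t-1}\pm k_t/2$, which, combined with the fork's identity $n=k'^2+k^2/4$, propagates backwards so that each $m_j$ becomes a twin-type value and the $k_j$ alternate between two fixed values down to $k_0$, contradicting $k_0=1$ (the only $n$ with $1^2+2^2/4=n$ is $n=2$, whose sequence $(1,1,1)$ has no twin). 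The induction stops at $t=1$, leaving the step $(k_1,m_0,1)$, whose third entry $1$ halts the algorithm. Hence every step after the fork, like every step before it, is forced, there is no second fork, and the two displayed branches are exactly the sequences the algorithm produces.

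The step I expect to be the real obstacle is this uniqueness clause --- excluding a \emph{second} fork during the retracing phase. The discussion preceding the proposition spells it out only for the first retraced step $(k,m_i,k_i)$; the content of the induction is that the same mechanism works verbatim at every retraced step, the one point to verify being that the equation $n=k_{t-1}^2+k_t^2/4$ coming from a hypothetical second fork, together with the already-active $n=k'^2+k^2/4$ and the reducedness of the pre-fork steps, really does close up into the ``$k_j$ alternates, $m_j$ twin-type'' pattern all the way back to $k_0$.
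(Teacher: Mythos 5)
Your overall architecture (a downward induction establishing, stage by stage, that the retraced transition is forced) is a legitimate alternative to the paper's proof, which instead argues globally: the reverse of any produced sequence is again a produced sequence (by the main theorem), all produced sequences agree up to the first fork and have the same length (forkings are local), so the tail of any produced sequence is pinned down to be the reversed head, with no induction on the retraced steps at all. But the justification you give for the uniqueness clause of your induction --- which you rightly single out as the crux --- does not work. At a general retraced stage $t$, a hypothetical second fork does give $k_t$ even, $n=k_{t-1}^2+k_t^2/4$ and $m_{t-1}=k_{t-1}\pm k_t/2$; the trouble is that ``combining with the fork's identity $n=k'^2+k^2/4$'' yields only $k_{t-1}^2+k_t^2/4=k'^2+k^2/4$, which identifies nothing, since $n$ may admit several such representations (e.g.\ $13=3^2+4^2/4=2^2+6^2/4$). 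In the paper's ``general computations'' the propagation is powered by the fact that the two fork conditions live at the \emph{same} modulus $k$, forcing $k_i=k'$, $m_i\in\{m_+,m_-\}$, and hence the divisibility $k\mid m_i+m_{i+1}=2k'\pm k$, i.e.\ $k\mid 2k'$; it is precisely this divisibility that makes the classes $\pm m_\pm$ coincide mod $k$ and keeps the $k_j$'s alternating all the way down to $k_0$. At a general stage $t$ the analogous statement $k_t\mid 2k_{t-1}$ is not derivable (it is equivalent to the \emph{forward} stage $t$ having been a fork, which minimality of $i$ excludes), and without it the alternation breaks after two backward steps: the class of $-m_{t-2}\equiv -k_{t-1}+k_t/2 \pmod{k_t}$ need not be the twin class $k_{t-1}+k_t/2\pmod{k_t}$, so $k_{t-3}$ is not forced to equal $k_{t-1}$ and you never reach $k_0$.

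The gap is repairable, but not by the mechanism you describe. Two backward steps suffice: from $n=k_{t-1}^2+k_t^2/4$ and $m_{t-1}=k_{t-1}+\varepsilon k_t/2$, reducedness of $(k_{t-1},m_{t-1},k_t)$ gives $k_t\le k_{t-1}$, whence the best element mod $k_{t-1}$ in the class of $-m_{t-1}$ is $k_{t-1}-\varepsilon k_t/2$ (or one of two twins when $k_t=k_{t-1}$); thus $m_{t-2}\in\{k_{t-1}\pm k_t/2\}$ and $k_{t-2}=k_t$. But then $m_{t-2}$ lies in the twin class mod $k_{t-2}=k_t$, so the \emph{forward} choice of $m_{t-2}$ at stage $t-2<i+1$ was already a fork, contradicting the minimality of $i$ (with the degenerate cases $t\le1$ killed by $k_t\le k_{t-1}$ and $k_0=1$). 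Note that this repaired argument leans on minimality of the first fork in an essential way --- it is really the paper's reversal symmetry rephrased locally --- whereas your write-up invokes minimality only to select the fork, not to refute the second one. As it stands, the ``propagates backwards \dots down to $k_0$'' step is the missing idea, and the verification you defer would fail if attempted verbatim.
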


\begin{proof}Let us keep the notations of the previous computations to denote the first ``twin step'', if any.  By the main theorem, the two sequences of the proposition -- let us call them $s$ and $s'$ -- are produced by the algorithm. We shall show that they are the only ones. Let $s''$ be any other one. By the general computations above, $s''$ differs from $s$ only by extremely local forkings; in particular, it is finite and has the same length. By the main theorem, its reverse is also produced by the algorithm. By minimality of $i$, $s''$ and its reverse therefore coincide with $s$ up to the $i$-th step, hence $s''$ is equal to $s$ or $s'$.
\end{proof}

Moreover, an easy calculation shows that these two sequences produce the same $(a_j,b_j)$'s -- except the middle one -- hence the same solution of Pell's equation.

\section{Halting without twins}\label{halting without twins}

\begin{prop}When the sequence produced by the algorithm is unique, it is finite and of the form
$$(1,m_0,k_1),(k_1,m_1,k_2)\ldots,(k_2,m_1,k_1),(k_1,m_0,1).$$
\end{prop}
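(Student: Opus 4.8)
The plan is to combine the main theorem with the analysis of twin steps to show that, in the twin-free case, the algorithm's sequence is finite and symmetric (a palindrome). The key structural fact is \emph{reversibility}: by the main theorem, every step produced by the algorithm is reduced, so for each step $(k,m,k')$ the reverse triple $(k',m,k)$ is again a step, and in fact (modulo the $m$-variable) each step determines its \emph{predecessor} just as it determines its successor — the predecessor of $(k',m',k'')$ is the step $(k,m,k')$ with $m$ best mod $k$, $kk'=|m^2-n|$, and $k'\mid m+m'$. So the reversed sequence is itself a legitimate run of the algorithm.

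First I would record finiteness. Each $k_i$ is a positive integer, and since $k_{i+1}=|m_i^2-n|/k_i$ with $m_i$ one of the two elements of its class nearest $\sqrt n$, one has $k_ik_{i+1}=|m_i^2-n|<$ (a bound depending only on $n$, roughly $2k_i\sqrt n$), so $k_{i+1}<2\sqrt n$; more to the point, the pair $(m_i \bmod k_{i+1}, k_{i+1})$ ranges over a finite set (there are finitely many possible values of $k_{i+1}$, each $<\sqrt n$, and for each only finitely many residues). Hence some state $(m_{i-1}\bmod k_i,\,k_i)$ must recur; since each state determines the entire forward sequence and — by reversibility — also the entire backward sequence, recurrence forces the sequence to be eventually periodic in both directions, i.e. purely periodic. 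But the state $(0,1)$ occurs at the start (with $m_{-1}=0$, $k_0=1$), and by reversibility the step \emph{before} $(1,m_0,k_1)$ would have to exist inside the cycle; tracing it, its predecessor has $k\mid 0+m_0$ and $k\cdot 1=|m_0^2-n|$, forcing $k=|m_0^2-n|=k_1$ and then the state before that is again $(0,1)$ — so actually the cycle, run backwards from $(1,m_0,k_1)$, immediately reaches a $k$-value equal to $1$, which triggers the halting condition. This shows the run is genuinely finite rather than periodic: it must stop the first time $k=1$ recurs, which is at the symmetric end.

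Next I would establish the palindrome shape. Consider the sequence $s$ produced by the algorithm and its reverse $\bar s$; by the main theorem $\bar s$ is also produced by the algorithm. In the twin-free case, each step has a \emph{unique} successor, so there is only one sequence the algorithm can produce; therefore $s=\bar s$. Reading off what $s=\bar s$ means for the list $(1,m_0,k_1),(k_1,m_1,k_2),\dots,(k_1,m_0,1)$: the first triple $(1,m_0,k_1)$ reversed is $(k_1,m_0,1)$, which must then be the last triple; the second triple $(k_1,m_1,k_2)$ reversed is $(k_2,m_1,k_1)$, which must be the second-to-last; and so on. This is exactly the claimed form
$$(1,m_0,k_1),(k_1,m_1,k_2)\ldots,(k_2,m_1,k_1),(k_1,m_0,1).$$
One should check the parity/center case separately: if the length is odd the central step is its own reverse, $(k,m,k)$, which is fine; if even, the two central steps are $(k_j,m_j,k_{j+1})$ and its reverse $(k_{j+1},m_j,k_j)$ with $k_j=k_{j+1}$ — but a repeated $k$-value $k_j=k_{j+1}=k$ forces, via $k\mid m_{j-1}+m_j$ and $k\mid m_j+m_{j+1}$ and uniqueness of successors, all $k_i$ to alternate between two equal values, hence all equal to $k$, contradicting $k_0=1$ unless $k=1$, i.e. the degenerate one-step case $(1,m_0,1)$ already covered.

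The main obstacle I expect is the rigorous bookkeeping for reversibility in the \emph{twin-free} setting: one must be sure that ``unique successor everywhere'' really does imply ``unique sequence'', and that the backward-determinism argument does not secretly require passing through a twin step. Proposition \ref{proptwins} and the ``general computations'' preceding it handle precisely the twin case, so here the argument is cleaner — but I would still need to argue carefully that the absence of twins \emph{along the actual run} suffices (we are not claiming $n$ has no twin steps at all, only that the run produced is unique), and that the predecessor relation is well-defined on the run. Once reversibility is pinned down, the finiteness and palindrome conclusions follow quickly from the main theorem plus the recurrence-of-states argument.
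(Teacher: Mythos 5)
Your proposal follows essentially the same route as the paper: finiteness comes from the finiteness of the set of possible steps plus backward determinism (a recurrence of states would force the initial state, hence $k=1$, to recur, triggering the halt), and the palindrome shape comes from the fact that the reversed run is also a legitimate run, so uniqueness forces the run to equal its reverse. The one point you flag as needing care --- that twin-freeness of the run rules out twin \emph{predecessors} as well, so that the backward map is single-valued --- is exactly what the paper supplies in one line by reversing the ``general computations'' of the twin section, so your plan is sound there.

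One small correction: your parity check at the centre is wrong in the even case. If the palindrome has even length $2r$, the two central steps are $(k_{r-1},m_{r-1},k_r)$ and its reverse $(k_r,m_{r-1},k_{r-1})$; this does \emph{not} force $k_{r-1}=k_r$, and it is not a degenerate configuration (for $n=7$ the run is $(1,3,2),(2,3,1)$). This is precisely the second of the two ``turning-point'' configurations mentioned in the paper's closing remark, alongside the odd-length centre $(k,m,k)$. The slip is harmless for the proposition itself, since the claimed form does not constrain the centre, but the purported contradiction you derive there should be dropped.
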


\begin{proof}By hypothesis, there is no twin successor in the sequence, hence there is no ``twin predecessor'' either, by reversal in the general computations of the previous section. By proposition \ref{propbest}, the set of possible steps is finite, hence if the sequence was infinite, the $(i+p)$-th step would be equal to the $i$-th step, for some $i\ge 0$ and $p>0$, so that (since there are no ``twin predecessors'') the $p$-th step would be equal to the $0$-th, $k_p$ would be $1$, and there would be no $p$-th step at all (the algorithm would stop at the end of the $(p-1)$-th stage). This contradiction ends the proof that the sequence is finite. Again, by the main theorem, the reverse sequence is also produced by the algorithm hence (by uniqueness) both are equal.
\end{proof}

\noindent{\bf Remark.} In the previous section, we saw that as soon as some twin step is met, the middle of the sequence is reached and the remaining $k_j,m_j$'s are known, hence only the remaining $a_j,b_j$'s need further computation. By the main theorem, the same happens as soon as we meet some step of the form $(k,m,k)$ or some pair of consecutive steps of the form $(k,m,k'),(k',m,k)$. Therefore, the last proposition contains the main result, from a pragmatic point of vue: either some twin step is met, or one of these two configurations.

\author{Anne Bauval}\\
\address{\small Institut de Math\'ematiques de Toulouse\\
\' Equipe \' Emile Picard, UMR 5580\\
Universit\'e Toulouse III\\
118 Route de Narbonne, 31400 Toulouse - France\\
e-mail: bauval@math.univ-toulouse.fr}

\end{document}